\theoremstyle{plain}
\newtheorem{thm}{Theorem}[section]
\newtheorem{cor}[thm]{Corollary}
\newtheorem{conj}[thm]{Conjecture}
\theoremstyle{definition}
\newtheorem{dfn}[thm]{Definition}
\newtheorem{dfns-rems}[thm]{Definitions and Remarks}
\newtheorem{notas-rems}[thm]{Notations and Remarks}
\newtheorem{exmps-rems}[thm]{Examples and Remarks}
\begin{document}


\title[$M$-Shellability of Discrete Polymatroids]
{$M$-Shellability of Discrete Polymatroids}


\author[M. Alizadeh]{Majid Alizadeh}

\address{Majid Alizadeh, School of Mathematics, Statistics and Computer Science,
college of Science, University of Tehran, P. O. Box 14155-6455,
Tehran,Iran.}

\email{malizadeh@khayam.ut.ac.ir}

\author[A. Goodarzi]{Afshin Goodarzi}

\address{Afshin Goodarzi, School of Mathematics, Statistics and Computer Science,
College of Science, University of Tehran, Tehran, Iran.}

\email{af.goodarzi@gmail.com}

\author[S. Yassemi]{Siamak Yassemi}

\address{Siamak Yassemi, School of Mathematics, Statistics and Computer Science,
College of Science, University of Tehran, Tehran, Iran, and School
of Mathematics, Institute for Research in Fundamental Sciences
(IPM), P.O. Box 19395-5746, Tehran, Iran.}

\email{yassemi@ipm.ir}

\urladdr{http://math.ipm.ac.ir/yassemi/}

\begin{abstract} In this note we show that every discrete polymatroid is
$M$-shellable. This gives, in a partial case, a positive answer to
a conjecture of Chari and improves a recent result of Schweig
where he proved that the $h$-vector of a lattice path matroid
satisfies a conjecture of Stanley.

\end{abstract}


\subjclass[2000]{05B35, 05E45}


\keywords{Discrete polymatroid, $h$-vector, Shellability}


\thanks{}


\maketitle


\section{Introduction and Preliminaries} \label{sec1}

A \emph{matroid} $M$ is a pair $(E(M),\mathcal{B}(M))$ consisting
of a finite set $E(M)$ and a collection $\mathcal{B}(M)$ of
subsets of $E(M)$, called \emph{bases} of $M$, that satisfy the
following two conditions:
\begin{itemize}
\item[(B1)] $\mathcal{B}(M)\neq \emptyset$, and
\item[(B2)] for each pair of distinct sets $B$, $B'$ in
$\mathcal{B}(M)$ and for each element $x\in B\setminus B'$, there
is an element $y\in B'\setminus B$ such that $(B-x)\cup y$ is in
$\mathcal{B}(M)$.
\end{itemize}

Subsets of bases are called \emph{independent sets}. The
collection of independent sets of a matroid form an abstract
simplicial complex, called \emph{matroid complex}.

 For a $(d-1)$-dimensional simplicial complex $\Delta$, let $f_i$ be the
number of $(i-1)$-dimensional faces of $\Delta$ (i.e. the faces of
cardinal $i$), and $f(\Delta)=(f_0, f_1, \ldots, f_{d})$ its
$f$-vector. The $h$-vector
 $h(\Delta)=(h_0,h_1, \ldots, h_{d})$ is defined by
 $H(y)=F(y-1)$, where $H(y)=\sum_{i=0}^{d} h_{i}y^{d-i}$ and
$F(y)=\sum_{i=0}^{d} f_{i} y^{d-i}$.

A \emph{monomial order ideal} $\Gamma$ on a set $V=\{x_1, \ldots,
x_n\}$ of variables is a set of monomials $x_{1}^{a_1} \ldots
x_{n}^{a_n}$ such that $u\in \Gamma$ and $v|u$ imply that $v\in
\Gamma$. The \emph{degree sequence} of $\Gamma$ is
$h(\Gamma)=(h_{0}, h_{1}, \ldots)$, where $h_{i} = \#\{u \in\Gamma
| \mathrm{deg} u=i\}$. We will not distinguish between a monomial
order ideal and its poset (ordered by divisibility).

A \emph{pure $M$-vector} is the degree sequence of an order ideal
of monomials, whose maximal elements have the same degree.

The following conjecture of Stanley~\cite{kn:ST} is one of the
most important conjectures on $h$-vector of matroid complexes.

\begin{conj}\label{conj1} \textrm{$($Stanley$)$} The $h$-vector of a matroid complex is a
pure $M$-vector.
\end{conj}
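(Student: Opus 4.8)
The plan is to realize $h(\Delta)$ as the degree sequence of an explicit \emph{pure} order ideal of monomials. Matroid complexes are shellable (Provan--Billera), hence Cohen--Macaulay, so over a field $k$ the artinian reduction $A=k[\Delta]/(I_\Delta+\Theta)$ by a linear system of parameters $\Theta$ has Hilbert function equal to $h(\Delta)$. Because $\Delta$ is Cohen--Macaulay, $h(\Delta)$ is already an $M$-sequence, i.e.\ the degree sequence of \emph{some} monomial order ideal; the additional content of the conjecture is precisely the \emph{purity}. Thus the whole problem reduces to the following combinatorial task for an arbitrary matroid $M$ of rank $r$: produce a monomial order ideal $\Gamma$ with $\#\{u\in\Gamma:\deg u=i\}=h_i$ for every $i$ and with all maximal monomials of $\Gamma$ of one common degree $s=\max\{i:h_i\neq 0\}$.

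The engine I would use is the activities expansion of the Tutte polynomial. Fixing a linear order on $E(M)$, Crapo's theorem gives $T_M(x,y)=\sum_{B\in\mathcal{B}(M)}x^{i(B)}y^{e(B)}$, where $i(B)$ and $e(B)$ are internal and external activity; specializing $y=1$ yields $H(x)=T_M(x,1)$, so that $h_i=\#\{B\in\mathcal{B}(M):i(B)=r-i\}$. In particular $\sum_i h_i=|\mathcal{B}(M)|$, and there must exist a bijection $\phi\colon\mathcal{B}(M)\to\Gamma$ with $\deg\phi(B)=r-i(B)$. Since the number of internally inactive elements of $B$ is exactly $r-i(B)$, the natural candidate is to set $\phi(B)=\prod_{e}x_{\sigma(e)}$, the product ranging over the internally inactive elements of $B$, for a suitable relabelling $\sigma$; this has the correct degree automatically. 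The strategy is then to order the bases so that replacing $B$ by the basis obtained from a fundamental exchange on its largest internally inactive element corresponds to dividing $\phi(B)$ by a single variable, which would force $\Gamma=\phi(\mathcal{B}(M))$ to be closed under divisibility, hence an order ideal.

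The main obstacle --- and exactly the reason the conjecture remains open for arbitrary matroids --- is to guarantee that $\Gamma$ is simultaneously an order ideal \emph{and} pure. Reproducing the individual counts $h_i$ is controlled by Macaulay's theorem together with the known non-negativity of matroid $h$-vectors; the real difficulty is the \emph{global} coherence of the labelling, namely that every single-variable divisor of each $\phi(B)$ is again $\phi(B')$ for a genuine basis $B'$, and that no $\phi(B)$ of degree below $s$ is maximal in $\Gamma$. That such a coherent labelling exists can be checked on special classes (lattice path matroids, and the discrete polymatroids of this paper, where an $M$-shelling produces the labels directly), but a uniform construction appears to require a new structural feature of matroid shellings: a linear order on $\mathcal{B}(M)$ under which fundamental exchanges act as single-variable divisions compatibly across the entire matroid, with the minimal basis in the order attached to the trivial monomial. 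The argument stands or falls on producing this order for every matroid, and it is precisely here --- upgrading the Cohen--Macaulay $M$-sequence to a \emph{pure} one --- that the hard part of Stanley's conjecture is concentrated.
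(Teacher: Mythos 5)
First, a point of context: the statement you were given is Conjecture~\ref{conj1} (Stanley's conjecture), which the paper does not prove and does not claim to prove. The paper's actual contribution is Theorem~\ref{thm1} (every discrete polymatroid is $M$-shellable), which together with Schweig's theorem on lattice path matroids yields Corollary~\ref{cor3}; these are partial results toward the stronger Conjecture~\ref{conj2}, not a proof of Conjecture~\ref{conj1}. So there is no proof in the paper to compare yours against, and any complete argument for the statement would be a major new theorem, not an exercise.

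Second, your proposal is not a proof, and you say so yourself. The preliminary reductions are correct and standard: matroid complexes are shellable, hence Cohen--Macaulay, so $h(\Delta)$ is the degree sequence of \emph{some} monomial order ideal; and Crapo's activity expansion $T_M(x,1)=\sum_{B\in\mathcal{B}(M)}x^{i(B)}$ identifies $h_i$ with the number of bases of internal activity $r-i$. But everything after that is a plan rather than an argument: you never construct the labelling $\sigma$ and the ordering of bases under which $\phi(\mathcal{B}(M))$ is closed under divisibility and pure, and you concede that the argument ``stands or falls'' on producing it. That missing step is not a technical detail to be filled in; it is the entire content of the conjecture. There is also a concrete defect in your ``natural candidate'': if $\sigma$ is injective (a relabelling in the usual sense of the word), then $\phi(B)$ is the squarefree monomial supported on the internally passive elements of $B$, and this provably cannot work --- already for $U_{2,3}$ the $h$-vector is $(1,1,1)$, which is not the degree sequence of any order ideal of \emph{squarefree} monomials, since a squarefree monomial of degree $2$ has two distinct degree-$1$ divisors, forcing $h_1\geq 2$. (Indeed, for $U_{2,3}$ the internally passive sets are $\emptyset$, $\{3\}$, $\{2,3\}$, which are not closed under taking subsets.) So $\sigma$ must identify distinct ground-set elements, and nothing in your proposal controls how, nor shows that the resulting collection of monomials is an order ideal, nor that it is pure. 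What you have written is a reasonable summary of the known reductions, with the decisive construction --- the hard part of Stanley's conjecture --- absent.
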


A poset $Q$ is an $M$-\emph{poset} if there exists a monomial
$M$ on a finite set $E$ of indeterminates (variables) such that $Q$ is
isomorphic to the poset  (ordered by divisibility) on the set of monomials on $E$ that divide
$M$. Equivalently, an $M$-poset is a direct product of chains.

Given two elements $x\leq y$ of a poset $P$, the interval $[x,y]$
is called an $M$-\emph{interval} if it is an $M$-poset. A pure
poset $P$ is called $M$-\emph{partitionable} if $P$ can be
partitioned into $M$-intervals $[x_1,y_1], \ldots, [x_n,y_n]$ such
that for each $1\leq i\leq n$, $y_i$ is a maximal element of the
poset $P$. Such a partition is called an
$M$-\emph{partition} of the poset $P$.

\begin{dfn}\label{dfn1} An $M$-\emph{shelling} of a poset $P$ is an $M$-partition of
$P$ along with an ordering of the $M$-intervals such that the
union of the elements in any initial subsequence of $M$-intervals
is an order ideal of $P$. A poset $P$ is $M$-\emph{shellable}, if
it admits an $M$-shelling.
\end{dfn}

Chari \cite{kn:ch} proposed a stronger version of Stanley's
conjecture for $h$-vectors of matroid complexes based on the
concept of $M$-shellability:

\begin{conj}\label{conj2} \textrm{$($Chari$)$} The $h$-vector of a matroid complex is a
shellable $M$-vector.
\end{conj}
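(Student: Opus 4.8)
The plan is to translate the purely numerical statement about the $h$-vector into an explicit monomial model and then exhibit an $M$-shelling of that model. First I would fix a linear order $e_1 < \cdots < e_n$ on $E(M)$ and invoke the classical activity theory together with Bj\"orner's shelling of the matroid complex: in the induced shelling each basis $B$ has a restriction face $R(B) \subseteq B$, namely its set of internally inactive elements, and
\[
h_i = \#\{\,B \in \mathcal{B}(M) : |R(B)| = i\,\}.
\]
This realises every entry $h_i$ as a count of facets carrying a prescribed statistic, and it presents $h(\Delta)$ through the Boolean restriction intervals $[R(B), B]$ of the face poset. These Boolean pieces are the raw material from which I hope to manufacture a monomial model whose degree sequence is exactly $h(\Delta)$.

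The heart of the argument is the construction of a pure monomial order ideal $\Gamma$ with $h(\Gamma) = h(\Delta)$. The naive attempt, sending $B$ to the squarefree monomial $\prod_{e \in R(B)} x_e$, preserves degrees but fails to be injective, since distinct bases may share a restriction set; this is precisely why a genuine multicomplex, with exponents exceeding one, is forced upon us, in order to separate colliding bases into distinct monomials of the same degree. The idea, then, is to refine this assignment to a degree-preserving injection $B \mapsto m_B$ whose image is closed under divisibility, arranged so that a single basis exchange as in axiom (B2) corresponds to multiplying or dividing $m_B$ by exactly one variable. The exchange axiom supplies the local moves; the real work is to organise these moves so that they patch into one coherent order ideal rather than an incoherent set of monomials, and so that $\Gamma$ is pure.

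Once $\Gamma$ is in hand, I would derive its $M$-shelling from the shelling of the matroid complex. The strategy is to amalgamate the Boolean restriction intervals $[R(B), B]$ into $M$-intervals $[x_j, y_j]$ -- products of chains terminating at maximal monomials of $\Gamma$ -- and to inherit the order on these intervals from the facet order, so that every initial union of $M$-intervals is again an order ideal, as demanded by Definition~\ref{dfn1}. Verifying simultaneously that the amalgamation has the product-of-chains shape required of an $M$-interval and that the shelling order satisfies the order-ideal condition is exactly what would upgrade Stanley's Conjecture~\ref{conj1} to Chari's Conjecture~\ref{conj2}.

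I expect the decisive obstacle to lie in the second step: producing the degree-preserving injection into an honest monomial order ideal is already equivalent to Stanley's Conjecture~\ref{conj1}, which is open in general, and the $M$-shelling of the third step only adds a further global constraint on top of it. The difficulty is structural -- the exchange axiom controls the picture one element at a time, whereas purity, the order-ideal property, and the shelling order are all global -- so I do not anticipate a uniform construction valid for every matroid by these means alone. The realistic route is to first identify an intrinsic monomial model, the lattice points of an associated discrete polymatroid, for which the order ideal and its decomposition into products of chains are visible by construction, and to establish $M$-shellability there; this is the mechanism that delivers Conjecture~\ref{conj2} in the cases presently within reach, while the general matroid case remains tied to the resolution of Stanley's conjecture.
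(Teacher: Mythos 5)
You were asked to prove Conjecture~\ref{conj2}, but be aware of its status: it is an open conjecture, and the paper neither proves it nor claims to. The paper's actual contribution is Theorem~\ref{thm1} (every discrete polymatroid is $M$-shellable), proved by induction on the number of maximal elements of $\Gamma$: one splits $\Gamma$ according to divisibility by the top power $x_r^k$ of a suitable variable into $\Gamma_1$ and $x_r^k\Gamma'$, shows both pieces are again discrete polymatroids, and concatenates their inductively obtained $M$-shellings. Combined with Schweig's theorem that the $h$-vector of a lattice path matroid is a $PM$-vector, this yields Corollary~\ref{cor3}: Chari's conjecture holds for lattice path matroids. That partial result is all the paper establishes, so there is no proof of the statement itself against which your attempt can be matched.

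Judged as a proof, your proposal has a genuine and fatal gap, which to your credit you identify yourself. Step one (Bj\"orner's shelling, internal activity, $h_i$ as the number of bases with $|R(B)|=i$) is correct and standard. But step two --- the degree-preserving injection $B\mapsto m_B$ whose image is a pure monomial order ideal --- is not a construction you supply; it is precisely the content of Stanley's Conjecture~\ref{conj1}, which is open, and step three (organizing the image into $M$-intervals compatible with a shelling order) adds the even stronger requirement of Conjecture~\ref{conj2} on top of it. A plan whose middle steps are the open problem itself is not a proof. Where your write-up is accurate is in its final paragraph: the only route presently available is to bypass the general matroid, work inside a class where the monomial model is intrinsic --- discrete polymatroids --- and prove $M$-shellability there; that is exactly the paper's strategy, and it delivers the conjecture only for those matroids (such as lattice path matroids) whose $h$-vectors are already known to be $PM$-vectors. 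For an arbitrary matroid complex, the statement remains as open after your argument as before it.
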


Recall that a pure $M$-vector is called shellable if it is the
degree sequence of an $M$-shellable order ideal of monomials.

Herzog and Hibi~\cite{kn:hh} introduced discrete polymatroid,
which it is a generalization of matroids. Let $\Gamma$ be a pure
monomial order ideal on the variables $\{x_1, \ldots, x_r\}$ and
for any $m\in\Gamma$, the degree of $x_{i}$ in $m$ is denoted by
$m_i$. We say $\Gamma$ is a discrete polymatroid if, for any two
maximal monomials $m,m'\in\Gamma$ and index $i$ with
$m_{i}>m'_{i}$, there exists an index $j$ such that $m_j<m'_j$ and
$\frac{x_j}{x_i}m\in\Gamma$, cf.~\cite[Definition 4.1.]{kn:js}.

The aim of this paper is to show that every discrete polymatroid
is $M$-shellable (Theorem~\ref{thm1}). We apply this
 result to show that the $h$-vector of a lattice path matroid
(see Section~\ref{sec2} for definition) satisfies
Conjecture~\ref{conj2}.


\section{Main Theorem}\label{sec2}

\begin{thm}\label{thm1} Every discrete polymatroid is $M$-shellable.
\end{thm}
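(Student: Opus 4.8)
The plan is to prove that a discrete polymatroid $\Gamma$, viewed as a poset of monomials ordered by divisibility, admits an $M$-shelling. The key structural fact I would exploit is the exchange property in the definition: for any two maximal monomials $m, m'$ and any index $i$ with $m_i > m_i'$, there is an index $j$ with $m_j < m_j'$ and $(x_j/x_i)m \in \Gamma$. This is precisely the kind of exchange axiom that in the ordinary matroid case drives lexicographic or linear-order shelling arguments, so the natural strategy is to impose a linear order on the maximal monomials and, for each maximal monomial, to carve out an $M$-interval (a product of chains) consisting of those divisors ``newly introduced'' by that monomial.

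First I would fix a total order $m^{(1)}, m^{(2)}, \ldots, m^{(N)}$ on the maximal monomials of $\Gamma$, choosing it to be compatible with a reverse-lexicographic or degree-refined order so that the exchange property can be applied to peel off the smallest coordinate discrepancies first. Next, for each $k$ I would define the restriction set: intuitively, the set of variables $x_i$ whose exponent in $m^{(k)}$ cannot be lowered while staying inside the union $\Gamma^{(k-1)}$ of divisors of the earlier maximal monomials. This restriction data should determine a canonical minimal monomial $x^{(k)}$ dividing $m^{(k)}$, and the associated interval $[x^{(k)}, m^{(k)}]$ would be the $k$-th block of the partition. I would then verify that each such interval is an $M$-interval, i.e. a product of chains — this should be immediate from the fact that an interval $[u,m]$ in the divisibility poset of monomials is always a product of chains (its length in the $x_i$ direction is $m_i - u_i$), so the $M$-interval condition is automatic once the endpoints lie in $\Gamma$.

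The two genuine obligations are: (i) that these intervals partition $\Gamma$, with every monomial lying in exactly one block, and (ii) the shelling/order-ideal condition, namely that each partial union $\Gamma^{(k)} = [x^{(1)}, m^{(1)}] \cup \cdots \cup [x^{(k)}, m^{(k)}]$ is itself an order ideal (downward closed under divisibility). Condition (i) is the standard ``each face has a unique minimal new generator'' bookkeeping, which I expect to follow from the definition of $x^{(k)}$ via the restriction set. The crux — and the step I expect to be the main obstacle — is establishing (ii) using the exchange property. Specifically, I would need to show that if a divisor $u$ of $m^{(k)}$ lies in the block of $m^{(k)}$ and $v \mid u$, then $v$ already appears in some earlier block; the exchange axiom is what guarantees that lowering an exponent of $m^{(k)}$ in a ``non-restricted'' direction produces a monomial still dominated by an earlier maximal monomial, so that no divisor escapes downward out of $\Gamma^{(k)}$. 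Making this argument precise will require carefully matching the index $j$ furnished by the exchange property to an earlier maximal monomial in the chosen order, and checking that the restriction sets are nested consistently with that order.

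Finally, once the $M$-shelling is constructed, the statement of Theorem~\ref{thm1} follows directly, and I would remark that specializing to the case where all maximal monomials are squarefree recovers the ordinary matroid situation, connecting the result to Conjecture~\ref{conj2}.
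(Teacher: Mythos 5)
Your outline takes a genuinely different route from the paper (which inducts on the number of maximal monomials, splitting $\Gamma$ into the subideal of monomials not divisible by $x_r^k$, where $k$ is the top exponent of a chosen variable $x_r$, and the ``contraction'' $\Gamma_2/x_r^k$, and then concatenates the two shellings). However, as written your argument has a genuine gap: the two obligations you correctly identify --- that the blocks $[x^{(k)},m^{(k)}]$ partition $\Gamma$ and that the partial unions are order ideals --- are exactly the content of the theorem, and you defer both (``I expect to follow'', ``will require carefully matching''). Worse, the definition of the restriction monomial is not just unfinished but, under its natural reading, wrong in the polymatroid setting, because exponents are not $0/1$. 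Take $\Gamma$ with maximal monomials $x^2, xy, y^2$ (the rank-$2$ uniform discrete polymatroid on two variables), ordered reverse-lexicographically. For $m^{(3)}=y^2$ the exponent of $y$ \emph{can} be lowered into $\Gamma^{(2)}$ (since $y\mid xy$), so the restriction set is empty and your block is $[1,y^2]$, which overlaps the earlier blocks; the correct block is the singleton $[y^2,y^2]$. Conversely, for maximal monomials $x^3, x^2y$ in that order, the exponent of $x$ in $x^2y$ \emph{cannot} be lowered into $[1,x^3]$, so the matroid-style rule ``full exponent on restricted variables'' gives the block $[x^2y,x^2y]$ and misses $y$ and $xy$. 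So $x^{(k)}_i$ must be defined coordinatewise as one more than the largest exponent reachable inside $\Gamma^{(k-1)}$, and proving that the resulting sets are disjoint intervals covering $\Gamma$, for a suitable order on the maximal monomials, is the real work --- it is not ``standard bookkeeping'' here.

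By contrast, the paper's induction avoids choosing a global order on the maximal monomials altogether: both $\Gamma_1=\{m\in\Gamma : x_r^k\nmid m\}$ and $\Gamma'=\{m/x_r^k : m\in\Gamma,\ x_r^k\mid m\}$ are shown to be discrete polymatroids with strictly fewer maximal elements, and the only thing to check is that $\Gamma_1$ followed by the shifted shelling of $\Gamma'$ keeps initial unions downward closed, which is a two-line verification (a divisor of a monomial in $x_r^k\Gamma'$ either misses $x_r^k$, hence lies in $\Gamma_1$, or divides the corresponding element of $\Gamma'$ after cancelling $x_r^k$). If you want to pursue your approach, you would essentially be proving a polymatroid analogue of the basis-ordering/restriction-set shelling of matroid complexes; that may well be doable and would be a stronger, more explicit statement, but it is not established by the proposal as it stands.
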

\begin{proof} Let $\Gamma$ be a discrete polymatroid on the
set $\{x_1, \ldots, x_r\}$ of variables and let $p$ be the number
of maximal elements of $\Gamma$. The proof is by
induction on $p$. If $p=1$,the basic case, then $\Gamma$ is an $M$-poset and the
assertion is obvious. So assume that $p>1$. Then there exist
 an index $j$ and two maximal elements $m$ and $m'$ in
$\Gamma$ with $m_j\neq m'_j$. With no lose of generality, we
assume that $j=r$. Now, put
\begin{itemize}
\item $k=\max\{m_r \ | \ m\in\Gamma\}$;
\item  $\Gamma_{1}=\{m\in\Gamma \ | \ x_{r}^k
\nmid m\}$;
\item $\Gamma_{2}=\Gamma -\Gamma_{1}$; and
\item $\Gamma'=\{\frac{m}{x_{r}^{k}} \ | \  m\in\Gamma_2\}$.
\end{itemize}

\noindent {\bf Claim:} $\Gamma_1$ and $\Gamma'$ are discrete
polymatroids.

\noindent{\bf Proof of Claim:} We only show that $\Gamma_1$ is discrete polynomial. A similar argument works for $\Gamma'$. First note that $\Gamma_1$ is a monomial order ideal. Since,  for $m\in\Gamma_{1}$ and $u\mid m$ we get that
$x_{r}^k\nmid u$ which implies that $u\in\Gamma_1$.To prove the purity of $\Gamma_1$, we assume that  this is not the case and get a contradiction. By assumption, there exist a maximal element $m$ in $\Gamma_1$ and an element
 $m'\in\Gamma_2$ such that $m\mid m'$. So $m'=x_{r}^t m$, for
some $t>0$. Let $m^{''}$ be a maximal elements in $\Gamma$ with
$m''_r<k$. Then there exists an index $j$ such that
$\frac{x_j}{x_i}m'=x_j x_{r}^{t-1}m\in\Gamma$ which it is contradict $m$ is a maximal element of $\Gamma_1$. Thus
$\Gamma_1$ is pure. To complete the proof we assume that $m$ and $m'$ be two monomials
in $\Gamma_1$ with $m_i>m'_i$, for some $i$. Then there exists an index $j$ such that
$m_j<m'_j$ and $\frac{x_j}{x_i}m\in\Gamma$, since $\Gamma$ is a
discrete polymatroid. If $j\neq r$, then
$x_{r}^k \nmid \frac{x_j}{x_i}m$, since $x_{r}^k \nmid m$. For $j=r$ we have $m_r<m'_r<k$ and  then $(\frac{x_j}{x_i}m)_r<k$.
Therefore $\Gamma_{1}$ is a discrete polymatroid. This complete the proof of the claim.

By induction hypothesis, there  exist  the
following $M$-shelling orders for $\Gamma_1$ and $\Gamma'$:
\begin{center}
$\Gamma_{1}=[a_1,b_1]\dot{\cup}\cdots\dot{\cup}[a_n,b_n]$\  \ and \ \
$\Gamma'=[c_1,d_1]\dot{\cup}\cdots\dot{\cup}[c_l,d_l]$.
\end{center}

We claim that the following order
\begin{center}
$\Gamma=[a_1,b_1]\dot{\cup}\cdots\dot{\cup}[a_n,b_n]\dot{\cup}
[x_{r}^{k}c_1,x_{r}^{k}d_1]\dot{\cup}\cdots\dot{\cup}[x_{r}^{k}c_l,x_{r}^{k}d_l]$
\end{center}
is an $M$-shelling for $\Gamma$. It suffices to show that every
initial subsequence
$A=\Gamma_1\dot{\cup}[x_{r}^{k}c_1,x_{r}^{k}d_1]\dot{\cup}\cdots\dot{\cup}[x_{r}^{k}c_s,x_{r}^{k}d_s]$
($s<l$) is an order ideal. Assume the contrary. Then there exist
$m\in A-\Gamma_1$ and $u\in\Gamma- A$ with $u\mid m$. Therefore, $\frac{u}{x_{r}^k}\in[c_1,d_1]\dot{\cup}\cdots\dot{\cup}[c_s,d_s]$. Since
$\frac{u}{x_{r}^k}\mid \frac{m}{x_{r}^k}$, and $\Gamma'$ is $M$-shellable.
It contradicts $u\in \Gamma- A$. Now the proof is complete.
\end{proof}

Note that the converse of Theorem~\ref{thm1} does not hold. As a counterexample, one can
 consider the monomial order ideal $\Sigma$ with maximal
elements $xy$ and $z^2$. It is easy to see that $\Sigma$ is
$M$-shellable but it is not a discrete polymatroid.

A sequence $(h_0, h_1,\ldots, h_r)$ is called a $PM$-\emph{vector}
if it is the degree sequence of some discrete polymatroid. Clearly,
every $PM$-vector is a pure $M$-vector. But Theorem~\ref{thm1}
gives the following generalization of this fact.

\begin{cor}\label{cor2} Every $PM$-vector is a shellable $M$-vector.
\end{cor}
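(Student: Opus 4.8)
The plan is to read the corollary off directly from Theorem~\ref{thm1} together with the definitions, so almost no new work is required. First I would recall that a $PM$-vector is, by definition, the degree sequence $h(\Gamma)=(h_0,\ldots,h_r)$ of some discrete polymatroid $\Gamma$ on a set of variables $\{x_1,\ldots,x_r\}$. Since a discrete polymatroid is by definition a \emph{pure} monomial order ideal, the sequence $h(\Gamma)$ is automatically the degree sequence of an order ideal of monomials whose maximal elements share a common degree; this is precisely the elementary observation, already noted in the text, that every $PM$-vector is a pure $M$-vector.

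Next I would invoke Theorem~\ref{thm1}: the discrete polymatroid $\Gamma$ is $M$-shellable, that is, it admits an $M$-shelling in the sense of Definition~\ref{dfn1}. Hence $\Gamma$ is an $M$-shellable order ideal of monomials whose degree sequence is exactly the given $PM$-vector. By the definition of a shellable $M$-vector, namely a pure $M$-vector that arises as the degree sequence of some $M$-shellable order ideal of monomials, the sequence $(h_0,\ldots,h_r)$ is a shellable $M$-vector, as desired.

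I do not expect any genuine obstacle here: the entire content already resides in Theorem~\ref{thm1}, and the corollary merely repackages it through the chain of definitions $PM$-vector $\rightarrow$ discrete polymatroid $\rightarrow$ $M$-shellable order ideal $\rightarrow$ shellable $M$-vector. The single point that deserves a word of care is that one and the same combinatorial object $\Gamma$ simultaneously realizes the $PM$-vector as a degree sequence and carries the $M$-shelling. This, however, is built into the definitions, since a discrete polymatroid \emph{is} a monomial order ideal and Theorem~\ref{thm1} shells that very ideal; no auxiliary construction or estimate is needed.
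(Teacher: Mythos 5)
Your proposal is correct and matches the paper's (implicit) argument exactly: the corollary is a direct consequence of Theorem~\ref{thm1} obtained by unwinding the definitions of $PM$-vector and shellable $M$-vector, with the only point of care being that the same order ideal $\Gamma$ both realizes the degree sequence and carries the $M$-shelling. No further comment is needed.
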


The $h$-vector of $\Sigma$ in the example before Corollary ~\ref{cor2} is $(1, 3, 2)$. It
shows that $(1, 3, 2)$ is a shellable $M$-vector, but it is indeed
a $PM$-vector (take the discrete polymatroid with maximal
elements $xy$ and $yz$). However we guess these two classes of
vectors are very closed.

We end the paper by a result on lattice path matroids.

\noindent Fix two lattice paths $P=p_{1}p_{2} \ldots p_{m+r}$ and
$Q=q_{1}q_{2} \ldots q_{m+r}$ from $(0,0)$ to $(m,r)$ with $P$
never going above $Q$. For every lattice path $R$ between $P$
and $Q$, let $\mathcal{N}(R)$ be the set of $R$'s north steps.

In \cite{kn:bmn}, the authors showed that
$M[P,Q]=\{\mathcal{N}(R): R$ is a path between $Q$ and $P\}$ is a
matroid. $M[P,Q]$ is called a \emph{lattice path matroid}.

Schweig \cite[Theorem 3.6.]{kn:js} showed that lattice path
matroids satisfy Conjecture~\ref{conj1}. Even more, he proved that
the $h$-vector of a lattice path matroid is a $PM$-vector,
~\cite[Corollary 4.5.]{kn:js}. This result of Schweig and
Corollary~\ref{cor2} together imply the following result, which
says that lattice path matroids satisfy Conjecture~\ref{conj2}.
\begin{cor}\label{cor3} The $h$-vector of a lattice path matroid
is a shellable $M$-vector.
\end{cor}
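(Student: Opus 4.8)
The plan is to obtain the statement by composing two facts that are already at hand: Schweig's computation of the $h$-vector of a lattice path matroid, and the structural result of this paper (Theorem~\ref{thm1}) that every discrete polymatroid is $M$-shellable. No new combinatorics is needed; the work is entirely in correctly chaining the two inputs.

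First I would invoke Schweig's result \cite[Corollary 4.5]{kn:js}, which asserts that the $h$-vector of any lattice path matroid $M[P,Q]$ is a $PM$-vector; that is, it arises as the degree sequence $h(\Gamma)$ of some discrete polymatroid $\Gamma$ on a suitable set of variables. This is the single external ingredient and I would use it as a black box, since reproving it is not the point of this note.

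Second I would apply Corollary~\ref{cor2}. Given the discrete polymatroid $\Gamma$ furnished above, Theorem~\ref{thm1} shows that $\Gamma$ is $M$-shellable, so its degree sequence is by definition a shellable $M$-vector. Combining the two facts, the $h$-vector of $M[P,Q]$ is a shellable $M$-vector, which is exactly the assertion of Conjecture~\ref{conj2} for lattice path matroids.

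The one point requiring genuine care is the bookkeeping around definitions: I must confirm that the notion of $PM$-vector used by Schweig coincides with the one adopted here (the degree sequence of a discrete polymatroid), so that Corollary~\ref{cor2} applies verbatim. Once that identification is made, the corollary is a one-line consequence of Theorem~\ref{thm1}, and there is no further obstacle, since the nontrivial $M$-shellability has already been established for the whole class of discrete polymatroids.
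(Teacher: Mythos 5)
Your proposal is correct and follows exactly the paper's own argument: the paper likewise cites Schweig's result that the $h$-vector of a lattice path matroid is a $PM$-vector and combines it with Corollary~\ref{cor2}, which is itself the direct consequence of Theorem~\ref{thm1}. Your extra remark about matching the definition of $PM$-vector with Schweig's is a reasonable sanity check but introduces nothing beyond what the paper does.
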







\end{document}